\documentclass[11pt,reqno]{amsart}
\usepackage[margin=1.15in]{geometry}
\usepackage{amsmath,amssymb,amsthm}
\usepackage[T1]{fontenc}
\usepackage{lmodern,microtype}
\usepackage[colorlinks=true,linkcolor=blue,citecolor=blue,urlcolor=blue]{hyperref}
\newtheorem{theorem}{Theorem}[section]
\newtheorem{proposition}[theorem]{Proposition}
\newtheorem{corollary}[theorem]{Corollary}

\theoremstyle{definition}
\newtheorem{definition}[theorem]{Definition}

\theoremstyle{remark}
\newtheorem{remark}[theorem]{Remark}
\newcommand{\N}{\mathbb N}
\newcommand{\Z}{\mathbb Z}
\newcommand{\Q}{\mathbb Q}
\newcommand{\A}{\mathbb A}
\newcommand{\cO}{\mathcal O}
\newcommand{\Tor}{\operatorname{Tor}}
\newcommand{\Ret}{\mathcal R}
\title{An undecidable problem in dynamics}
\author{Jason Bell}
\address{Department of Pure Mathematics, University of Waterloo, Waterloo, Ontario, Canada}
\subjclass[2020]{Primary 37P55; Secondary 11D99, 03D35, 14A10}
\keywords{Dynamical Mordell--Lang, Diophantine equations, undecidability, coherent sheaves}
\begin{document}
\begin{abstract} Given a variety $X$ with a self-map $\phi$ and subvarieties $Y$ and $Z$ of $X$, we consider the set of $n$ for which $\phi^n(Y)$ intersects $Z$ nontrivially.
Extending a beautiful construction of Lee and Nam, we show this gives rise to an undecidable problem for a fixed ambient variety $X$ and morphism $\phi$. We further use this construction to give a counterexample to a conjecture of the author, Satriano, and Sierra.
\end{abstract}
\maketitle

\section{Introduction}

Let $X$ be a variety over an algebraically closed field of characteristic zero,
let $\phi:X\to X$ be an endomorphism, let $Y\subseteq X$ be a closed
subvariety, and let $x\in X$.  The dynamical Mordell--Lang conjecture asserts
that the set
\[
\{n\in\N:\phi^n(x)\in Y\}
\]
is a finite, possibly empty, union of infinite arithmetic progressions along with a finite set.  One can interpret the assertion of the dynamical Mordell--Lang conjecture, roughly speaking, as saying that if the set of $n$ for which $\phi^n(x)$ lies in $Y$ is infinite then there should be a compelling geometric reason behind this phenomenon.  In particular, there should be a subvariety $Z$ of $Y$ that is forward invariant under some iterate of $\phi$ and some $a$ such that $\phi^a(x)\in Z$, so that $x$ is forced to continually return to the subvariety $Y$.

It is natural to ask what remains true when the point $x$ is replaced by a
positive-dimensional subvariety $W$.  There are at least two natural
analogues.  One can ask for the set of $n$ such that
\[
\phi^n(W)\subseteq Y,
\]
or one can impose the much weaker condition
\[
\phi^n(W)\cap Y\neq\varnothing.
\]
The first question retains much of the rigidity of ordinary dynamical
Mordell--Lang.  For example, for automorphisms of affine schemes of finite
type in characteristic zero, the author and Lagarias \cite{BL} proved that the set
of such $n$ is eventually periodic.

The second question, as it turns out, behaves very differently.  
Xie \cite[Question 69(i), p. 94]{Xie} (one can also see Question 9.13(i) of the arXiv version) asked whether, for a quasi-projective variety $X$ over $\mathbb C$ and irreducible closed subvarieties $W,Y\subseteq X$ satisfying
\[
\dim W+\dim Y<\dim X,
\]
the set
\begin{equation} \label{ret}
   \{n\in\N:\phi^n(W)\cap Y\neq\varnothing\}
\end{equation}
must again be a finite union of arithmetic progressions, with
singleton sets allowed. We denote the set in \eqref{ret} by
$\Ret_\phi(W,Y)$ and call it the \emph{return set} for $W$ with
respect to $\phi$ and $Y$, or more generally a \emph{dynamical
return set}. Images are taken set-theoretically, and intersections
are taken over the algebraically closed base field. For data
defined over $\Q$, we take this field to be $\overline{\Q}$.

Yang and Zhong
recently studied special cases of this question \cite{YZ}.  They also record
a striking example due to Jungin Lee and GyeongHyeon Nam, communicated privately to Xie, for
which the return set is precisely the set of composite numbers
\cite[Example 1.12]{YZ}. Thus the eventual periodicity predicted
by dynamical Mordell--Lang can fail even for this modest
generalization.

The purpose of this note is to show that the phenomenon in the
Lee--Nam example goes considerably further. A slight modification
of their construction can encode arbitrary polynomial equations
with bounded nonnegative integer unknowns. This gives much more than
a source of nonperiodic return sets, and now yields an undecidable
nonemptiness problem.

For $P\in\Z[X_1,\ldots,X_m,T]$, put
\begin{equation}\label{eq:bounded}
B_P=\{n\in\N:\text{there is } (a_1,\ldots , a_m)\in\{0,\ldots,n\}^m
\text{ with }P(a_1,\ldots ,a_m,n)=0\}.
\end{equation}
The bound on the unknowns is important and we note that for a fixed $P$, the set $B_P$
is computable, since membership of a given $n$ in $B_P$ can be checked by a finite search.
The true difficulty lies instead in deciding uniformly whether such a set is
nonempty.

Our first result realizes every such bounded Diophantine set as a return set.

\begin{theorem}\label{thm:intro}
Let $m\geq1$ and $P\in\Z[X_1,\ldots,X_m,T]$. There exist a
quadratic birational endomorphism $f$ of $\A^{2m+1}_{\Q}$,
a closed subvariety $Z\simeq\A^m$, and a closed subset $V_P$
defined explicitly from $P$, such that
\[
\{n\colon f^n(Z)\cap V_P\neq \varnothing\}=B_P.
\]
The map $f$ and subset $Z$ depend only on $m$, not on $P$.
\end{theorem}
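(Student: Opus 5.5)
The plan is to build $f$ so that after $n$ iterations one coordinate records $n$ and, for each $i$, one coordinate records the polynomial $x_i(x_i-1)\cdots(x_i-n)$. Its vanishing forces $x_i\in\{0,\dots,n\}$. The Diophantine condition is then imposed by the target $V_P$.

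\textbf{The construction.} Use coordinates $(t,x_1,\ldots,x_m,y_1,\ldots,y_m)$ on $\A^{2m+1}_{\Q}$ and define
\[
f(t,x,y)=\bigl(t+1,\;x_1,\ldots,x_m,\;y_1(x_1-t-1),\ldots,y_m(x_m-t-1)\bigr).
\]
Each component has degree at most $2$, so $f$ is quadratic. It is birational: on the open set where all $x_i-t-1\neq0$, it has the rational inverse
\[
(t,x,y)\mapsto\Bigl(t-1,\,x,\,\frac{y_i}{x_i-t}\Bigr).
\]
Take
\[
Z=\{t=0,\ y_1=x_1,\ldots,y_m=x_m\},
\]
which is the graph of a morphism $\A^m\to\A^{m+1}$ and so is a closed subvariety isomorphic to $\A^m$. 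Take
\[
V_P=\{y_1=\cdots=y_m=0,\ P(x_1,\ldots,x_m,t)=0\}.
\]
Neither $f$ nor $Z$ depends on $P$, and $V_P$ is explicit in $P$.

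\textbf{Computing the iterates.} By induction on $n$, the $k$-th application of $f$ (for $k=0,\ldots,n-1$) multiplies $y_i$ by $x_i-k-1$, while $t$ increases by one. Hence
\[
f^n(t,x,y)=\Bigl(t+n,\;x,\;\bigl(y_i\textstyle\prod_{k=1}^{n}(x_i-t-k)\bigr)_i\Bigr).
\]
Restricting to $Z$, where $t=0$ and $y_i=x_i$, gives
\[
f^n(Z)=\Bigl\{\bigl(n,\,x,\,(\textstyle\prod_{j=0}^{n}(x_i-j))_i\bigr):x\in\overline{\Q}^m\Bigr\}.
\]
This set-theoretic image is the graph of a morphism, hence closed, so no closure issues arise. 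The case $n=0$ is consistent: $f^0$ is the identity and the product reduces to $y_i=x_i$.

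\textbf{The intersection.} A point of $f^n(Z)$ lies in $V_P$ exactly when $\prod_{j=0}^{n}(x_i-j)=0$ for every $i$ and $P(x,n)=0$. The first condition holds exactly when each $x_i\in\{0,\ldots,n\}$. So $f^n(Z)\cap V_P\neq\varnothing$ over $\overline{\Q}$ if and only if $n\in B_P$.

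\textbf{Where care is needed.} The step most likely to need attention is making sure the image of $Z$ under $f^n$ is exactly the stated graph and is closed. This is why I choose $f$ to fix $x$ and act on $t$ by translation. A second point is confirming that "quadratic birational" is meant in the sense of a dominant morphism with a rational inverse; that is satisfied here even though $f$ is not an automorphism. The indexing of the product, with $t+1$ in the map and $y=x$ on $Z$, is chosen precisely so that the admissible range is $\{0,\ldots,n\}$, inclusive at both ends, matching \eqref{eq:bounded}.
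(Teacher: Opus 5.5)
Your proposal is correct and follows the paper's own proof. It uses the same map $f$, the same $Z$ and $V_P$, the same inductive computation $f^n(0,x,x)=\bigl(n,x,\prod_{j=0}^n(x_1-j),\ldots,\prod_{j=0}^n(x_m-j)\bigr)$, and the same observation that $f^n(Z)$ is already a closed graph. One small wording slip: for a general starting $t$, the $k$-th application multiplies $y_i$ by $x_i-t-k-1$, not $x_i-k-1$, but your displayed formula for $f^n$ is right.
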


We point out that the maps and subvarieties are completely explicit. Combining this
construction with a universal Diophantine equation gives
undecidability even when the map $f$ and starting subvariety $Z$ are fixed.

\begin{theorem}\label{thm:undec-intro}
There is a fixed quadratic birational endomorphism $f$ of
$\A^{17}_{\Q}$ and a fixed closed subvariety $Z$ for which there is
no algorithm deciding, from the equations of a closed subset $V$ over
$\Q$, whether the return set $\{n\colon f^n(Z)\cap V\neq \varnothing\}$ is empty.  \end{theorem}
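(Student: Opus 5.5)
The plan is to deduce this from Theorem~\ref{thm:intro} with $m=8$, which gives ambient space $\A^{2\cdot 8+1}=\A^{17}$, and to reduce Hilbert's tenth problem in nine natural-number unknowns to emptiness of $B_P$. Theorem~\ref{thm:intro} fixes the quadratic birational endomorphism $f$ of $\A^{17}_\Q$ and the closed subvariety $Z\simeq\A^8$ independently of $P$. It also gives $V_P$ explicitly, hence computably, from $P$, with $\{n: f^n(Z)\cap V_P\neq\varnothing\}=B_P$. So it suffices to exhibit a computable family of polynomials $P_k\in\Z[X_1,\ldots,X_8,T]$, indexed by $k\in\N$, for which there is no algorithm deciding whether $B_{P_k}\neq\varnothing$. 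Any algorithm for the return-set problem, composed with $k\mapsto V_{P_k}$, would decide this.

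For the input undecidability I will invoke the Matiyasevich--Jones theorem. There is a polynomial $U(K,Y_1,\ldots,Y_9)\in\Z[K,Y_1,\ldots,Y_9]$ such that the set of $k\in\N$ for which $U(k,y)=0$ has a solution $y\in\N^9$ is a non-recursive (indeed universal r.e.) set. The remaining issue is that in $B_P$ the unknowns $a_i$ are bounded by the parameter $n$, whereas the $y_j$ in $U$ are unbounded. I handle this by letting $n$ play the role of the largest unknown. Writing $U_j(k;a_1,\ldots,a_8,T)$ for $U(k,\cdot)$ with $T$ inserted in the $j$-th slot and $a_1,\ldots,a_8$ filling the remaining slots in order, set
\[
P_k(X_1,\ldots,X_8,T)=\prod_{j=1}^{9}U_j(k;X_1,\ldots,X_8,T).
\]
The map $k\mapsto P_k$ is clearly computable.

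Now I check that $B_{P_k}\neq\varnothing$ if and only if $U(k,y)=0$ has a solution $y\in\N^9$. If $y$ is a solution, choose $j$ with $y_j=\max_i y_i$ and put $n=y_j$. The other eight coordinates then lie in $\{0,\ldots,n\}$, so the $j$-th factor of $P_k$ vanishes and $n\in B_{P_k}$. Conversely, suppose $n\in B_{P_k}$ is witnessed by some $a\in\{0,\ldots,n\}^8$. Since $\Z$ is a domain, some factor $U_j$ vanishes at $(a,n)$, and reinserting $n$ into slot $j$ gives a solution of $U(k,\cdot)=0$ in $\N^9$.

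The only step needing care is matching the number of unknowns, so that exactly $m=8$ bounded unknowns plus the parameter $T$ suffice; this is what fixes the dimension $17$. The product-over-orderings trick avoids introducing a slack variable for the bound $a_i\le n$. A slack variable would cost extra unknowns or a sign condition that a single polynomial equation cannot enforce. Everything else is bookkeeping: $V_{P_k}$ is obtained effectively from the explicit construction in Theorem~\ref{thm:intro}, and $f$ and $Z$ do not depend on $k$.
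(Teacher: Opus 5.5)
Your proposal is correct and follows the paper's proof. Both fix the Matiyasevich--Jones universal polynomial in nine unknowns, form a product so that $T$ can occupy the slot of the largest unknown, and apply the Lee--Nam construction with $m=8$. The only difference is cosmetic: you multiply over the nine possible positions of $T$, whereas the paper multiplies over all of $S_9$. Your version works equally well, since the other eight coordinates only need to lie in $\{0,\ldots,n\}$, in any order.
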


Undecidability is of course familiar in dynamical and matrix problems, but it
often enters through the presence of several maps.  For
example, Paterson's matrix mortality problem asks whether some word in a
finite set of integer matrices has zero product, and is undecidable
\cite{Paterson}.  Similarly, Scanlon and Yasufuku \cite{SY} realize
exponential-polynomial sets as return sets using several
commuting endomorphisms.  With a single map, however, there is much
less room in which to encode computation. Indeed, for a single
linear map with rational input data, the ``point-to-point'' orbit
problem is decidable in polynomial time \cite{KL}, while deciding
whether the orbit of a specified rational point ever meets a
rational linear hyperplane is equivalent to the classical Skolem
problem and remains open. Recently, Karimov, Kelmendi, Ouaknine,
and Worrell \cite{KKOW} obtained an undecidability result for a
single linear map by asking for \emph{multiple} reachability:
whether there is a point in a prescribed semialgebraic set
whose orbit meets a target hyperplane in at least a fixed number of
distinct points. Theorem~\ref{thm:undec-intro} gives undecidability
with a single map and a single 
condition on nonempty intersections.  The additional freedom needed to encode difficult Diophantine problems
comes from allowing us to work with the orbit of a
positive-dimensional subvariety instead of a point.

For affine data
given by polynomial equations over $\Q$, membership in the return
set is decidable. Indeed, for a specified $n$, one can compute $f^n$ and
form the ideal
\[
I_Z+(f^n)^*I_V.
\]
The intersection $f^n(Z)\cap V$ is empty over $\overline{\Q}$ if
and only if this ideal contains $1$, which can be decided by a
Gr\"obner-basis computation. 
Thus both the return set and its complement are computable, and the same
argument works more generally for quasi-projective varieties by using a
finite affine cover.
What cannot be decided uniformly is
whether a return occurs at \emph{some} time. In fact, a related
construction gives families in which every return set is either
empty or a tail of $\N$, and there is still no algorithm deciding
which possibility occurs.

There is also a connection with a conjecture of the author, Satriano, and
Sierra concerning dynamical Mordell--Lang for coherent sheaves
\cite[Conjecture 1.2]{BSS}.  Our conjecture was that the eventual periodicity asserted in the dynamical Mordell--Lang conjecture should extend more generally to homological quantities that measure failure of varieties to intersect ``as expected''.
The same basic construction of Lee and Nam, together with a
direct homological calculation, gives the following counterexample to the
endomorphism case of our conjecture.

\begin{theorem}\label{thm:tor-intro}
There exist a quadratic birational endomorphism
$f:\A^3_{\Q}\to\A^3_{\Q}$ and smooth closed curves $Z,V$, each isomorphic
to $\A^1$, such that
\[
\left\{n\in\N:
\Tor^{\cO_{\A^3}}_1((f^n)^*\cO_V,\cO_Z)\ne0\right\}
=\{r^2:r\geq1\}.
\]
In particular, the endomorphism case of that conjecture is false.
\end{theorem}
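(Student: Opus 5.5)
The plan is to adapt the Lee--Nam ``falling factorial'' trick in three variables and then reduce the $\Tor$ computation to a gcd computation in one variable. I take
\[
f(x,y,z)=\bigl(x,\;y+1,\;z(x-y)\bigr),\qquad Z=\{(s,0,1)\},\qquad V=\{z=0,\ y=(x+1)^2\}.
\]

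\emph{Properties of $f$, $Z$, $V$.} The map $f$ is quadratic. It is birational, with inverse $(x,y,z)\mapsto (x,\,y-1,\,z/(x-y+1))$. Both $Z$ and $V$ are smooth curves isomorphic to $\A^1$, parametrized by $s$ and by $x$ respectively.

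\emph{Iterates.} By induction,
\[
f^n(x,y,z)=\Bigl(x,\;y+n,\;z\prod_{k=0}^{n-1}(x-y-k)\Bigr).
\]
So $(f^n)^*\cO_V=\cO/I_n$, where
\[
I_n=\Bigl(z\prod_{k<n}(x-y-k),\;\,y+n-(x+1)^2\Bigr).
\]
The second generator lets us eliminate $y=(x+1)^2-n$. This gives
\[
M_n:=(f^n)^*\cO_V\cong \Q[x,z]/(z\,Q_n(x)),
\]
where $Q_n(x)=\prod_{k<n}\bigl(x-(x+1)^2+n-k\bigr)$.

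\emph{Computing $\Tor$.} I resolve $\cO_Z$ instead of $M_n$, since $(f^n)^*\cO_V$ need not have a well-behaved Koszul resolution. The curve $Z$ is cut out by the regular sequence $(y,\,z-1)$, so $\Tor_i(M_n,\cO_Z)$ is the Koszul homology of $(y,z-1)$ acting on $M_n$.

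The element $z-1$ is a nonzerodivisor on $\Q[x,z]/(zQ_n)$, because $z-1$ is coprime to $zQ_n(x)$ in the UFD $\Q[x,z]$. Moreover $M_n/(z-1)M_n\cong\Q[x]/(Q_n)$. The standard iterated-Koszul argument then gives
\[
\Tor_1(M_n,\cO_Z)\cong\ker\Bigl(\Q[x]/(Q_n)\xrightarrow{\;\cdot((x+1)^2-n)\;}\Q[x]/(Q_n)\Bigr),
\]
where $y$ acts as $(x+1)^2-n$. In $\Q[x]$ this kernel is nonzero exactly when $\gcd\bigl(Q_n,(x+1)^2-n\bigr)$ is nonconstant. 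That happens exactly when $Q_n$ and $(x+1)^2-n$ have a common root over $\overline{\Q}$.

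For $n=0$ we have $Q_0=1$, so $\Tor_1=0$. For $n\ge 1$, at a root of $(x+1)^2=n$ the $k$-th factor of $Q_n$ becomes $x-k$. Hence a common root exists iff some $x\in\{0,\dots,n-1\}$ satisfies $(x+1)^2=n$, i.e. iff $n=r^2$ with $r\ge1$; the constraint $r-1\le n-1$ is then automatic. So the set in Theorem~\ref{thm:tor-intro} is $\{r^2:r\ge1\}$. This set is not a finite union of arithmetic progressions together with a finite set, which contradicts \cite[Conjecture 1.2]{BSS} in the endomorphism case.

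\emph{Main obstacle.} I expect the delicate step to be the homological reduction. One must justify computing $\Tor$ via the Koszul resolution of $\cO_Z$, check that $z-1$ is really regular on $M_n$, and then pass from $H_1$ of the length-two Koszul complex to the kernel of multiplication by $y$ on $M_n/(z-1)$. The last step uses the long exact sequence relating the Koszul complex on $(y,z-1)$ to the one on $z-1$ alone; since $H_1$ of the latter vanishes, $H_1$ of the full complex reduces to that kernel. The dynamical part is a routine induction.
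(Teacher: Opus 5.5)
Your proof is correct, but it computes $\Tor_1$ by a different route from the paper. Your data also differ cosmetically: the map itself generates the whole falling factorial $\prod_{k<n}(x-k)$ from the coordinate line $\{y=0,\ z=1\}$. The paper instead uses $f(t,x,y)=(t+1,x,y(x-t-1))$ and $Z=V(t,y-x)$, so there the factor $x$ comes from the initial condition.

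The paper never resolves anything. It uses the general identity $\Tor_1^R(R/I_n,R/J)\cong(I_n\cap J)/I_nJ$. Vanishing for non-squares follows from $I_n+J=R$, i.e.\ from the set-theoretic Lee--Nam computation plus the Nullstellensatz. For $n=(j+1)^2$ it exhibits an explicit element $u=s_nyF_n-p_ng_n\in(I_n\cap J)\setminus I_nJ$, using that $yF_n$ and $g_n$ are coprime in $R$.

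Your Koszul reduction, via regularity of $z-1$ on $M_n$, handles both directions in one stroke and computes the module itself: $\Tor_1\cong\Q[x]/(d_n)$ with $d_n=\gcd(Q_n,(x+1)^2-n)$. This shows $\Tor_1$ is one-dimensional over $\Q$ exactly when $n$ is a positive square, and you never need to pass to $\overline{\Q}$. The price is reliance on the special shape of the data: $Z$ is a complete intersection with one equation regular on $M_n$, after which everything happens in the PID $\Q[x]$. The paper's witness argument is more elementary and applies to any pair of ideals.

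One small correction to your aside: $(f^n)^*\cO_V$ does have a Koszul resolution. Its two generators are coprime in the UFD $R$ and hence form a regular sequence, so resolving $\cO_Z$ instead is a convenience, not a necessity.
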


The paper is organized as follows. In Section~\ref{sec:tlc} we
look at a class of sets we call TLC sets, which we'll later show arise as return sets. In Section~\ref{sec:waterfalls} we give the construction of Lee and Nam
and prove Theorem~\ref{thm:intro}. Section~\ref{sec:jones} uses a
universal Diophantine equation to fix the ambient dimension and
proves Theorem~\ref{thm:undec-intro}. In Section~\ref{sec:tails} we
show that undecidability persists even when one restricts to a setting in which every return set is
empty or a tail of $\N$. Finally, in Section~\ref{sec:tor} we give
the coherent-sheaf counterexample.

\section{Largest coordinates of Diophantine solutions}\label{sec:tlc}

Before turning to dynamics, we isolate the Diophantine mechanism underlying
the construction of Lee and Nam.  The key point is that we can create a dynamical system $(X,\phi)$ with subvarieties $W$ and $Y$ whose return times are precisely the largest coordinates of nonnegative integer zeros of a multivariate integer polynomial.

The Davis--Putnam--Robinson--Matiyasevich theorem (see, for example,
\cite{Mat}) shows that unrestricted
existential Diophantine conditions can encode arbitrary computably enumerable
sets.  If instead one takes the largest coordinate of a Diophantine solution, the behaviour is quite
different, and membership in the resulting set can always be decided by a finite search.  For
bookkeeping purposes, we give these sets a name.  The word ``Turing'' is
meant only to emphasize this computability-theoretic aspect.

We use the convention $\N=\{0,1,2,\ldots\}$.

\begin{definition}
A subset $S$ of $\N$ is a \emph{Turing largest-coordinate set}, or
\emph{TLC set}, if there exist $r\geq1$ and a polynomial
\[
Q\in\Z[X_1,\ldots,X_r]
\]
such that
\[
S=\{\max(a_1,\ldots,a_r):
(a_1,\ldots,a_r)\in\N^r,\ Q(a_1,\ldots,a_r)=0\}.
\]
\end{definition}

We note that every TLC set $S$ is computable since to decide whether $n\in S$, it suffices
to test the finitely many tuples in $\{0,\ldots,n\}^r$ having largest
coordinate $n$.  The restriction to largest coordinates is important here, since an unrestricted existential projection of the zero set of an integer
polynomial is able to encode all computably enumerable subsets of the natural numbers.

The following observation gives the form of TLC sets that will be useful for
the dynamical construction.

\begin{proposition}\label{prop:tlc}
A subset $S$ of $\N$ is a TLC set if and only if there exist $m\geq1$ and
a polynomial
\[
P\in\Z[X_1,\ldots,X_m,T]
\]
such that
\[
S=B_P
=\{n\in\N:\text{there is }
(a_1,\ldots,a_m)\in\{0,\ldots,n\}^m
\text{ with }P(a_1,\ldots ,a_m,n)=0\}.
\]
\end{proposition}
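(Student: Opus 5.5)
Both implications can be proved by writing down an explicit polynomial; the only tool needed is the standard Lagrange four-squares device for encoding inequalities between nonnegative integers.

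\emph{From $B_P$ to a TLC set.} Suppose $S=B_P$ with $P\in\Z[X_1,\ldots,X_m,T]$. We want a polynomial $Q$ whose nonnegative zeros have largest coordinate exactly $n$ precisely when $n\in B_P$. Use variables $X_1,\ldots,X_m,T$ together with four auxiliary variables $U_{i,1},\ldots,U_{i,4}$ for each $i\le m$, and set
\[
Q=P(X_1,\ldots,X_m,T)^2+\sum_{i=1}^m\Bigl(T-X_i-\sum_{j=1}^4U_{i,j}^2\Bigr)^2 .
\]
Over $\N$, a zero of $Q$ is exactly a tuple with $P(a,n)=0$ and $a_i\le n$ for every $i$. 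Indeed, $n-a_i\ge 0$ is then a sum of four squares by Lagrange, and conversely any such representation forces $a_i\le n$. Each $u_{i,j}$ satisfies $u_{i,j}\le u_{i,j}^2\le n$, so the maximum of all coordinates is $n$. Conversely, every $n\in B_P$ is attained by choosing a witness $a$ and any four-square representation of each $n-a_i$. Hence the TLC set defined by $Q$ equals $B_P$.

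\emph{From a TLC set to $B_P$.} Suppose $S$ is the TLC set of $Q\in\Z[X_1,\ldots,X_r]$. Take $m=r$ and define
\[
P(X_1,\ldots,X_r,T)=Q(X_1,\ldots,X_r)^2+\prod_{i=1}^r(T-X_i)^2 .
\]
Let $a\in\{0,\ldots,n\}^r$. Then $P(a,n)=0$ holds if and only if $Q(a)=0$ and some $a_i$ equals $n$. Since all $a_i\le n$, the second condition says exactly $\max(a)=n$. So $B_P=S$.

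The only point needing care is the inequality encoding in the first direction: the auxiliary variables must not push the maximum above $n$. This is handled by the bound $u\le u^2\le n$ for $u\in\N$. Everything else is a direct check.
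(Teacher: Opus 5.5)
Your proof is correct, but it takes a different route from the paper in both directions, and the difference matters most for TLC $\Rightarrow B_P$.

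\textbf{TLC $\Rightarrow B_P$.} The paper symmetrizes. For $r\ge 2$ it sets $P=\prod_{\sigma\in S_r}Q(X_{\sigma(1)},\ldots,X_{\sigma(r)})|_{X_r=T}$, so a coordinate attaining the maximum is permuted into the $T$ slot. It treats $r=1$ separately with $P(X_1,T)=Q(T)$. You instead keep all $r$ coordinates as bounded unknowns and force one of them to equal $T$ with the extra summand $\prod_i(T-X_i)^2$.

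Your version has two advantages: it needs no case split, and $P$ has degree $\max(2\deg Q,2r)$ rather than $r!\deg Q$. The cost is that you use $m=r$ bounded unknowns instead of $m=r-1$. That saved variable matters later. In Section~\ref{sec:jones} the paper applies this direction to the nine-unknown universal polynomial $U(a,X_1,\ldots,X_9)$ to get $m=8$, which gives the ambient space $\A^{2\cdot 8+1}=\A^{17}$ in Theorem~\ref{thm:undec-intro}. Your construction would only give $\A^{19}$ there.

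\textbf{$B_P\Rightarrow$ TLC.} Here you and the paper use the same slack-variable idea, but your appeal to Lagrange's four-square theorem is unnecessary. The unknowns in the TLC definition already range over $\N$, so a single slack variable $Y_i$ with $X_i+Y_i=T$, as in the paper, already encodes $X_i\le T$. It also gives $Y_i\le T$ automatically. That removes the need for the bound $u\le u^2$ and for $4m$ auxiliary variables.
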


\begin{proof}
Suppose first that $S$ is a TLC set, so there is a polynomial $Q\in\mathbb{Z}[X_1,\ldots ,X_r]$ such that
\[
S=\{\max(a_1,\ldots,a_r):
(a_1,\ldots ,a_r)\in\N^r,\ Q(a_1,\ldots ,a_r)=0\}.
\]
If $r=1$, take $m=1$ and $P(X_1,T)=Q(T)$. Suppose now that
$r\geq2$, and define
\[
P(X_1,\ldots,X_{r-1},T)
:=
\prod_{\sigma\in S_r}
Q(X_{\sigma(1)},\ldots,X_{\sigma(r)})\Big|_{X_r=T}.
\]

Here $S_r$ denotes the symmetric group. If $Q$ vanishes at a
nonnegative integer tuple with largest coordinate $n$, permuting
a coordinate equal to $n$ into the last position gives a zero
of $P$ with all remaining coordinates at most $n$. Conversely,
a vanishing factor of $P(a_1,\ldots,a_{r-1},n)$ gives a zero of
$Q$ whose largest coordinate is $n$. Thus $S=B_P$.

Conversely, suppose that $S=B_P$ for some
$P\in\Z[X_1,\ldots,X_m,T]$.  Introduce new variables
$Y_1,\ldots,Y_m$ and set
\[
Q(X_1,\ldots ,X_m, Y_1,\ldots, Y_m,T)
=
P(X_1,\ldots ,X_m,T)^2
+
\sum_{i=1}^m(X_i+Y_i-T)^2.
\]
A nonnegative integer zero of $Q$ satisfies
\[
P(X_1,\ldots ,X_m,T)=0,
\qquad
X_i+Y_i=T\quad(1\leq i\leq m).
\]
Hence every $X_i$ and $Y_i$ is at most $T$, so $T$ is the largest coordinate.
Conversely, if
\[
P(a_1,\ldots ,a_m,n)=0,
\qquad
0\leq a_i\leq n,
\]
then taking $Y_i=n-a_i$ gives a nonnegative integer zero of $Q$ whose largest
coordinate is $n$.  Thus the set of largest coordinates of the nonnegative
integer zeros of $Q$ is exactly $B_P$.
\end{proof}

The next section shows that the
Lee--Nam construction realizes every TLC set as a dynamical return set.

\section{Don't go chasing waterfalls}\label{sec:waterfalls}
The original motivation behind this paper was to ask whether there
might still be some reasonably well-behaved description of the
return sets in Xie's general dynamical Mordell--Lang question.
The Lee--Nam construction shows that these sets can encode complicated Diophantine sets and so this quest was more complex than I had originally envisioned.

Let $k$ be an algebraically closed field of characteristic zero. We use affine coordinates
\[
(t,x_1,\ldots,x_m,y_1,\ldots,y_m)
\]
on $\A_k^{2m+1}$, and we define the endomorphism
\begin{equation}\label{eq:map}
f(t, x_1,\ldots ,x_m,y_1,\ldots ,y_m)
=\bigl(t+1, x_1,\ldots ,x_m,
y_1(x_1-t-1),\ldots,y_m(x_m-t-1)\bigr).
\end{equation}
We take
\[
Z=V(t,y_1-x_1,\ldots,y_m-x_m),\qquad
V_P=V(y_1,\ldots,y_m,P(x_1,\ldots ,x_m ,t)).
\]
We note that these quantities are defined over $\Q$ when $P$ has rational coefficients and more generally over any subfield of $k$ containing all coefficients of $P$.

\begin{proposition}[The Lee--Nam machine]\label{prop:realization}
For $P\in\Z[ X_1,\ldots ,X_m,T]$, one has $$\{n\colon f^n(Z)\cap V_P \neq \varnothing\}=B_P.$$
In particular, every TLC set can be realized as a return set for a
quadratic birational endomorphism of affine space.
\end{proposition}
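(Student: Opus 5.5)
We compute $f^n(Z)$ explicitly and then read off when it meets $V_P$.

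By induction on $n$, the iterate is
\[
f^n(t,x,y)=\Bigl(t+n,\,x,\,\bigl(y_i\textstyle\prod_{j=1}^{n}(x_i-t-j)\bigr)_{i}\Bigr),
\]
since at step $j$ the current $t$-coordinate is $t+j-1$ and the factor introduced is $x_i-(t+j-1)-1=x_i-t-j$. A point of $Z$ has the form $(0,a,a)$ with $a=(a_1,\ldots,a_m)\in k^m$. Hence
\[
f^n(Z)=\Bigl\{\bigl(n,\,a,\,(a_i\textstyle\prod_{j=1}^n(a_i-j))_i\bigr):a\in k^m\Bigr\},
\]
taking set-theoretic images as the paper stipulates. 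Note that $a_i\prod_{j=1}^n(a_i-j)=\prod_{j=0}^n(a_i-j)$.

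Intersecting with $V_P$ imposes two conditions. First, $\prod_{j=0}^n(a_i-j)=0$ for every $i$, which holds exactly when each $a_i\in\{0,1,\ldots,n\}$. Second, $P(a_1,\ldots,a_m,n)=0$, since the $t$-coordinate equals $n$. So $f^n(Z)\cap V_P\neq\varnothing$ if and only if there is an $a\in\{0,\ldots,n\}^m$ with $P(a,n)=0$, that is, if and only if $n\in B_P$. Because $k$ has characteristic zero, the integers $0,\ldots,n$ are distinct in $k$, so the root set is exactly this range and not a reduction of it.

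For the final sentence of the proposition:
\begin{itemize}
\item \emph{Quadratic:} $f$ is visibly of degree $2$.
\item \emph{Birational:} on the open set where every $x_i-t-1\neq0$, the inverse is $(t,x,y)\mapsto\bigl(t-1,x,(y_i/(x_i-t))_i\bigr)$.
\item \emph{TLC sets:} Proposition~\ref{prop:tlc} writes every TLC set as some $B_P$, so each one is realized as a return set.
\end{itemize}

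There is no serious obstacle. The only steps needing care are the index bookkeeping in the closed form for $f^n$, and the observation that $y_i=x_i$ on $Z$ supplies the extra factor $a_i$, which is what allows $a_i=0$.
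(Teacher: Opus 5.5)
Your proof is correct and follows the paper's argument. Both compute $f^n(0,a,a)=\bigl(n,a,H_n(a_1),\ldots,H_n(a_m)\bigr)$ with $H_n(X)=\prod_{j=0}^{n}(X-j)$ and then use that the roots of $H_n$ in $k$ are exactly $0,\ldots,n$; your remarks on the quadratic and birational properties, and on obtaining every TLC set as some $B_P$, spell out what the paper leaves implicit.
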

\begin{proof}
Write
\[
H_n(X)=\prod_{j=0}^{n}(X-j).
\]
Induction gives
\[
f^n(0, x_1,\ldots, x_m,x_1,\ldots, x_m)
=\bigl(n, x_1,\ldots, x_m,H_n(x_1),\ldots,H_n(x_m)\bigr).
\]
Thus $f^n(Z)\cap V_P$ is nonempty precisely when there are
$x_1,\ldots,x_m\in k$ such that
\[
H_n(x_1)=\cdots=H_n(x_m)=0,
\qquad P(x_1,\ldots,x_m,n)=0.
\]
Since the roots of $H_n$ in $k$ are exactly $0,1,\ldots,n$, this is equivalent to $n\in B_P$.
\end{proof}
\begin{remark}
The reader might wonder whether the pathological return set is somehow an
artefact of $f^n(Z)$ being merely constructible, and whether the situation
would change upon replacing it by its closure.  In fact, the computation
above gives
\[
f^n(Z)
=
V\bigl(t-n,y_1-H_n(x_1),\ldots,y_m-H_n(x_m)\bigr),
\]
so $f^n(Z)$ is already closed.
\end{remark}

We note that the map $f$ above is in fact birational with rational inverse given by
\[
(t,x_1,\ldots ,x_m,y_1,\ldots ,y_m)\longmapsto
\left(t-1,x_1,\ldots ,x_m,
\frac{y_1}{x_1-t},\ldots,\frac{y_m}{x_m-t}\right).
\]
It is not an automorphism, however, since the inverse is merely rational and not regular. 

For nonconstant $P\ne0$, the dimensions of $Z$ and $V_P$ constructed above satisfy
$\dim Z=m$ and $\dim V_P\leq m$, so their sum is less than $2m+1$.

\begin{remark}\label{rem:smooth}
The closed set $V_P$ need not be irreducible in general, which is a constraint in Xie's formulation \cite{Xie}, but one extra coordinate
allows us to arrange that both subvarieties are smooth and
irreducible. We extend $f$ to
\[
f':=f\times\mathrm{id}_{\A^1}
\]
on $\A^{2m+2}$, with last coordinate $z$, and put
\[
\begin{aligned}
Z'
&=V(t,y_1-x_1,\ldots,y_m-x_m,z),\\
V'_P
&=V(y_1,\ldots,y_m,z-P(x_1,\ldots ,x_m,t)).
\end{aligned}
\]
Then $Z'\simeq\A^m$ and
$V'_P\simeq\A^{m+1}$, with the latter being the graph of $P$
over the $( x_1,\ldots ,x_m,t)$ coordinates. Since $z=0$ on every
iterate of $Z'$, the intersection conditions are exactly
those in Proposition~\ref{prop:realization}. Hence the return set for $Z'$ with
respect to $f'$ and $V'_P$ is precisely $B_P$, and
\[
\dim Z'+\dim V'_P=2m+1<2m+2.
\]
Moreover, $ f'$ and $Z'$ still depend only on $m$.
\end{remark}

\begin{corollary}\label{cor:h10}
There is no algorithm deciding nonemptiness of return sets for endomorphisms and closed subsets given by equations over $\Q$. This remains true for quadratic birational endomorphisms.
\end{corollary}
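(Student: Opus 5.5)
The plan is to reduce Hilbert's tenth problem to nonemptiness of return sets, using Proposition~\ref{prop:realization} and Proposition~\ref{prop:tlc}. Given an arbitrary $Q\in\Z[X_1,\ldots,X_r]$, we will effectively produce a polynomial $P\in\Z[X_1,\ldots,X_m,T]$ such that $B_P\neq\varnothing$ if and only if $Q$ has a zero in $\N^r$. Then the return set $\{n: f^n(Z)\cap V_P\neq\varnothing\}$ is nonempty exactly when $Q$ has a nonnegative integer zero. Here $f$ is the quadratic birational endomorphism from \eqref{eq:map}, and $V_P$ is written down explicitly over $\Q$ from $P$. A decision procedure for nonemptiness of return sets would therefore decide whether an arbitrary integer polynomial has a zero in $\N^r$. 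This is undecidable by the Davis--Putnam--Robinson--Matiyasevich theorem, since solvability over $\N$ and over $\Z$ are interreducible, for instance via Lagrange's four-square theorem.

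The key steps are as follows. First, take $Q$ and form $P$ exactly as in the first half of the proof of Proposition~\ref{prop:tlc}. Set $m=r-1$ and let $P$ be the product over $\sigma\in S_r$ of $Q(X_{\sigma(1)},\ldots,X_{\sigma(r)})$ with $X_r=T$; if $r=1$, take $P(X_1,T)=Q(T)$. That proof shows $B_P$ is precisely the set of largest coordinates of zeros of $Q$ in $\N^r$. In particular, $B_P$ is nonempty if and only if $Q$ has such a zero. The construction $Q\mapsto P\mapsto V_P=V(y_1,\ldots,y_m,P(x,t))$ is clearly computable. Second, apply Proposition~\ref{prop:realization} to identify the return set with $B_P$. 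Third, note that $f$ and $Z$ depend only on $m=r-1$. We can pad $Q$ with dummy variables if we want a fixed $r$, but this is not needed for the corollary as stated, since the corollary quantifies over endomorphisms as inputs too. Finally, if irreducibility or smoothness is desired, we can instead use Remark~\ref{rem:smooth}.

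The main obstacle is only bookkeeping: making sure the reduction is uniform and effective, and that the undecidable source problem is phrased over $\N$. For the latter, I would simply cite DPRM in its form that every computably enumerable set is Diophantine over $\N$. Taking a non-computable c.e.\ set $A$ with defining polynomial $D(a,\mathbf{y})$, the family $Q_a(\mathbf{y})=D(a,\mathbf{y})$ then has undecidable solvability. This family has a fixed number of variables, so a single $f$ and a single $Z$ suffice, which anticipates Theorem~\ref{thm:undec-intro}.
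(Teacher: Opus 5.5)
Your argument is correct, and your overall strategy matches the paper's: reduce solvability of integer polynomials over $\N$ to nonemptiness via Proposition~\ref{prop:realization}. The only difference is the choice of $P$.

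The paper simply takes $P(X_1,\ldots,X_m,T)=Q(X_1,\ldots,X_m)$, independent of $T$. Then $B_P$ is the set of $n$ that are at least the largest coordinate of some zero of $Q$ in $\N^m$, and this is nonempty exactly when $Q$ has such a zero. So Proposition~\ref{prop:tlc} is not needed at all, and $V_P$ has the same degree as $Q$. As Section~\ref{sec:tails} notes, the resulting return set is a tail of $\N$.

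Your symmetrized product instead makes $B_P$ exactly the TLC set of largest coordinates of zeros. For $r\geq 2$ it saves one bounded variable, giving ambient dimension $2r-1$ rather than $2r+1$, at the cost of multiplying the degree by $r!$. This is precisely the trade-off the paper exploits later. It uses your construction with $m=8$ to reach $\A^{17}$ in Theorem~\ref{thm:undec-intro}, while the simpler choice $P=Q$ with $m=9$ gives the $\A^{19}$ family of Section~\ref{sec:tails}.
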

\begin{proof}
Given $Q\in\Z[X_1,\ldots,X_m]$, apply Proposition~\ref{prop:realization} to $P(X_1,\ldots ,X_m,T)=Q(X_1,\ldots ,X_m)$. Then using $f, Z$, and $V_P$ as in the Proposition
\[
\{n\colon f^n(Z)\cap V_P\neq \varnothing\}\neq \varnothing
\quad\Longleftrightarrow\quad
Q\text{ has a zero in }\N^m.
\]
The latter problem is undecidable by the Davis--Putnam--Robinson--Matiyasevich theorem; see \cite{Mat}.
\end{proof}

\section{A fixed dynamical system}\label{sec:jones}

It is useful first to recall an intuitive way of thinking about the halting problem in computer science.
If you work with your favourite programming language, then given a program along with some initial
input, we can ask whether the resulting computation of running the program on this input eventually terminates.
There is no algorithm which answers this question correctly for every
program and every input.

On the other hand, there is no difficulty in writing a single program which
takes another program and its input as data and simply runs that program on that input.
This universal program halts on the given data precisely when the
program it is simulating halts. Thus all of the complexity of the
halting problem can be placed inside one fixed machine, with only
its input varying.

The Davis--Putnam--Robinson--Matiyasevich theorem gives a remarkable
Diophantine analogue of this observation.  After encoding programs and their
inputs by nonnegative integers, there is a single integer polynomial
\[
U(A,X_1,\ldots,X_r)
\]
such that
\[
a\in K
\quad\Longleftrightarrow\quad
U(a,u_1,\ldots,u_r)=0
\]
for some $u_1,\ldots,u_r\in\N$, where $K$ is a fixed version of the
halting set. Such a polynomial can be constructed effectively, where
the parameter $a$ encodes the program and its input, while the
remaining variables provide a Diophantine certificate that the
corresponding computation halts.

A theorem of Matiyasevich, for which Jones gave a complete proof in
\cite{Jones}, shows that one may take $r=9$.  Thus the undecidability of the
halting problem can be packaged into the existence of nonnegative integer
zeros of one fixed polynomial in ten variables, one of which is regarded as
a parameter.

We now combine this with the Lee--Nam construction to prove Theorem~\ref{thm:undec-intro}.
\begin{proof}[Proof of Theorem~\ref{thm:undec-intro}]
We let $K$ be a fixed version of the halting set and we now fix a polynomial $U$ as above, so that
\begin{equation}\label{eq:jones}
a\in K
\quad\Longleftrightarrow\quad
\exists (u_1,\ldots,u_9)\in\N^9
\quad U(a,u_1,\ldots,u_9)=0.
\end{equation}

For each $a$, form
\[
P_a(X_1,\ldots,X_8,T)=
\prod_{\sigma \in S_9}U(a,X_{\sigma(1)},\ldots,X_{\sigma(9)})\Big|_{X_9=T}.
\]
By the first half of Proposition~\ref{prop:tlc}, $B_{P_a}$ is precisely the set of largest coordinates of zeros of $U(a,X_1,\ldots, X_9)$ in $\N^9$. Hence
\[
a\in K\quad\Longleftrightarrow\quad B_{P_a}\ne\varnothing.
\]
Taking $m=8$ in Proposition~\ref{prop:realization} gives the desired result.
\end{proof}
\begin{remark}
Only the closed subset $V$ varies in Theorem \ref{thm:undec-intro}.  The map $f$ and the closed subset $Z$ are fixed.  Once the polynomial $U$ is fixed, even the degrees of the polynomial equations defining $V$ are bounded independently of $a$. 
\end{remark}

Applying Remark~\ref{rem:smooth} with $m=8$ gives the same
undecidability conclusion with ambient variety $\A^{18}$, and where now both $V$ and $Z$ can be taken to be smooth and irreducible, and with the sum of their
dimensions strictly smaller than the ambient dimension.

\section{A sun that never sets}\label{sec:tails}

In fact, we show that the Lee--Nam construction can be used to give an even stronger phenomenon:
once a return occurs, every later time is also a return, and yet there is no
algorithm for deciding whether a return ever occurs at all.

\begin{corollary}
There is a fixed quadratic birational endomorphism
\[
f:\A^{19}_{\Q}\to\A^{19}_{\Q}
\]
and a fixed smooth closed subvariety $Z$ such that, as $V$ varies through an
explicitly defined family of closed subsets, every return set
\[
\{n\colon f^n(Z)\cap V\neq\varnothing\}
\]
is either empty or a tail of $\N$, and there is no algorithm for deciding
which of these two possibilities occurs.
\end{corollary}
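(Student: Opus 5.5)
The plan is to reuse the Lee--Nam machine of Proposition~\ref{prop:realization} without the ``largest coordinate'' trick of Proposition~\ref{prop:tlc}. The idea is to let the polynomial $P$ not depend on $T$ at all, so that the condition defining $B_P$ becomes ``there is a zero with all coordinates at most $n$'', which is monotone in $n$. Concretely, I will fix the universal polynomial $U(A,X_1,\ldots,X_9)$ from Section~\ref{sec:jones}, satisfying \eqref{eq:jones}. Take $m=9$ in \eqref{eq:map}, so that the ambient space is $\A^{2\cdot 9+1}=\A^{19}$, and let $f$ and $Z=V(t,y_1-x_1,\ldots,y_9-x_9)$ be as in Section~\ref{sec:waterfalls}. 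Then $f$ is the fixed quadratic birational endomorphism, and $Z\simeq\A^9$ is a smooth closed subvariety. Both depend only on $m=9$.

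For each $a\in\N$, I set $P_a(X_1,\ldots,X_9,T)=U(a,X_1,\ldots,X_9)$ and let $V=V_{P_a}=V(y_1,\ldots,y_9,U(a,x_1,\ldots,x_9))$. This is an explicit family of closed subsets over $\Q$, and its equations are computable from $a$. By Proposition~\ref{prop:realization}, the return set equals
\[
B_{P_a}=\{n\in\N:\exists (u_1,\ldots,u_9)\in\{0,\ldots,n\}^9,\ U(a,u_1,\ldots,u_9)=0\}.
\]

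Two checks remain. First, the set $B_{P_a}$ is upward closed: since $\{0,\ldots,n\}^9\subseteq\{0,\ldots,n+1\}^9$ and the condition does not involve $n$ otherwise, $n\in B_{P_a}$ implies $n+1\in B_{P_a}$. Hence $B_{P_a}$ is either empty or equal to $\{n: n\geq N_0\}$, where $N_0$ is the minimal value of $\max u_i$ over the zeros. Second, $B_{P_a}\neq\varnothing$ if and only if $U(a,\cdot)$ has a zero in $\N^9$, which by \eqref{eq:jones} holds if and only if $a\in K$. An algorithm deciding which of the two possibilities occurs would therefore decide membership in the halting set $K$, which is impossible.

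I do not expect a serious obstacle. The only point needing care is that the reduction $a\mapsto V_{P_a}$ is effective, which is clear since $U$ is fixed and explicit. It is also worth recording that this construction spends one more coordinate than Theorem~\ref{thm:undec-intro}: that theorem used $m=8$ because one variable was absorbed into $T$, whereas here all nine unknowns must be bounded by $n$. That accounts for the dimension $19$.
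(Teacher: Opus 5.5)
Your proposal is correct and matches the paper's proof: the same choice $m=9$, the same fixed $f$ and $Z$, the family $V_a=V(y_1,\ldots,y_9,U(a,x_1,\ldots,x_9))$ with $U$ independent of $T$, upward closure of $B_{P_a}$ giving a tail starting at the minimal largest coordinate of a zero, and the reduction from the halting set $K$. The paper additionally records the dynamical explanation $f(V_a)\subseteq V_a$ for the persistence of returns, but this is not needed for the argument.
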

\begin{proof}
Let
\[
U(A,X_1,\ldots,X_9)\in\Z[A,X_1,\ldots,X_9]
\]
be the fixed polynomial from the preceding section, so that for some fixed undecidable computably enumerable set $K\subseteq\N$ (for example, one can take a standard encoding of the halting problem) we have
\[
a\in K
\quad\Longleftrightarrow\quad
U(a,u_1,\ldots ,u_9)=0
\]
for some $(u_1,\ldots ,u_9)\in\N^9$.

We use the Lee--Nam construction with nine bounded variables.  Thus, in
coordinates
\[
(t,x_1,\ldots,x_9,y_1,\ldots,y_9)
\]
on $\A^{19}$, let
\[
f(t, x_1,\ldots ,x_9, y_1,\ldots, y_9)
=
\bigl(t+1,x_1,\ldots ,x_9,
y_1(x_1-t-1),\ldots,y_9(x_9-t-1)\bigr),
\]
and let
\[
Z=V(t,y_1-x_1,\ldots,y_9-x_9).
\]
Both $f$ and $Z$ are fixed.

For each $a\in\N$, regard
\[
U(a,X_1,\ldots,X_9)
\]
as a polynomial in $X_1,\ldots,X_9,T$ which is independent of $T$, and put
\[
V_a
=
V\bigl(y_1,\ldots,y_9,U(a,x_1,\ldots ,x_9)\bigr).
\]
By the Lee--Nam construction,
\[
\Ret_f(Z,V_a)
=
\{n\in\N:
\text{there is }(u_1,\ldots ,u_9)\in\{0,\ldots,n\}^9
\text{ with }U(a,u_1,\ldots ,u_9)=0\}.
\]

If $a\notin K$, this set is empty.  Suppose now that $a\in K$, and let
\[
b_a
=
\min\{\max_i u_i:
U(a,u_1,\ldots ,u_9)=0,\ (u_1,\ldots ,u_9)\in\N^9\}.
\]
Then a solution $(u_1,\ldots ,u_9)$ realizing this minimum belongs to
$\{0,\ldots,n\}^9$ exactly when $n\geq b_a$.  Hence
\[
\Ret_f(Z,V_a)=\{n\in\N:n\geq b_a\}.
\]

There is also a direct dynamical reason for this persistence.  Since the
$x_i$ are fixed by $f$, the polynomial $U(a, x_1,\ldots ,x_9)$ remains zero
along the orbit of a point of $V_a$, while $y_i=0$ implies that the new
$y_i$ coordinate is again zero.  Thus
\[
f(V_a)\subseteq V_a.
\]
Once an iterate of $Z$ meets $V_a$, every subsequent iterate does as well.

Consequently every return set in this family is either empty or a tail of
$\N$.  If there were an algorithm deciding which alternative occurs, then
we could decide whether $a\in K$, a contradiction.
\end{proof}

Thus we see nonemptiness, finiteness, infinitude, and cofiniteness are all undecidable uniformly, even among these examples whose return sets are individually eventually periodic. 

\section{A counterexample for coherent sheaves}\label{sec:tor}

In earlier joint work with Matt Satriano and Sue Sierra, we considered
a version of dynamical Mordell--Lang for coherent sheaves in which ordinary
intersection is replaced by the vanishing of higher Tor groups.  Recall that, for subvarieties $Y,Z$ meeting properly in a smooth ambient variety $X$, Serre’s intersection formula expresses their local intersection multiplicities as alternating sums of lengths of Tor modules $\Tor_i^X(\cO_Y,\cO_Z)$,
where $\cO_Y$ and $\cO_Z$ are the structure sheaves associated to $Y$ and $Z$ respectively \cite[Appendix A, p.~426]{Hartshorne}.

The vanishing of the higher Tor groups then says that $Y$ and $Z$ are
\emph{homologically transverse}, which can be viewed as an algebraic proxy for the
statement that their intersection behaves as expected.

For example, two curves in $\A^3$ are generically disjoint.  If
\[
Y=V(y,z),\qquad Z=V(x,y),
\]
then they meet at the origin, which is in some sense unexpected.  Writing
$R=k[x,y,z]$, one has
\[
\Tor_1^R(R/(y,z),R/(x,y))
 \simeq
 \frac{(y,z)\cap(x,y)}{(y,z)(x,y)},
\]
and the image of $y$ in the right-hand side is nonzero in the quotient.  In this particular case, the nonvanishing of the first Tor group
is detecting this intersection.

It turns out that, although this homological framework contains finer information than
set-theoretic intersection, it still has substantial overlap with the
variant of dynamical Mordell--Lang considered by Xie.  The Lee--Nam
construction gives a particularly quick counterexample to the main
conjecture of \cite{BSS} in the endomorphism case.

\begin{proof}[Proof of Theorem~\ref{thm:tor-intro}]  We work over an algebraically closed
field $k$ of characteristic zero and put
$R=k[t,x,y],$
which we regard as the coordinate ring of $\A^3_k$.  Consider
\[
f(t,x,y)=(t+1,x,y(x-t-1)),
\]
and let $Z$ and $V$ be the curves defined respectively by the ideals
\[
J=(t,y-x)
\qquad\text{and}\qquad
(y,t-(x+1)^2).
\]
Both are smooth curves isomorphic to $\A^1$.

Proposition~\ref{prop:realization}, applied to
\[
P(X,T)=T-(X+1)^2,
\]
already shows that
\[
f^n(Z)\cap V\ne\varnothing
\]
precisely when $n$ is a positive perfect square.  We show that exactly the
same set of return times is detected by the first Tor group.

For $n\geq0$, put
\[
F_n(t,x)=\prod_{j=1}^n(x-t-j),
\qquad
g_n(t,x)=t+n-(x+1)^2,
\]
where $F_0=1$.

Since $\mathbb{A}^3={\rm Spec}(R)$, we can identify coherent sheaves with their corresponding
finitely generated $R$-modules.  A direct induction shows that $(f^n)^*(\cO_V)$ is identified with $R/I_n$
with
$I_n=(yF_n,g_n)$
and
$\cO_Z$ is identified with $R/J$.

Thus
\[
\Tor_1^{\cO_{\A^3}}\bigl((f^n)^*(\cO_V),\cO_Z\bigr)
\]
is nonzero if and only if
\[
\Tor_1^R(R/I_n,R/J)\cong (I_n\cap J)/I_nJ
\]
is nonzero.  Since $f^n(Z)\cap V$ is empty when $n$ is not a positive perfect square, we see that in this case $I_n+J=R$ and so $I_n\cap J=I_nJ$ and $\Tor_1^R(R/I_n,R/J)$ vanishes in this case.  

Next suppose that $n=(j+1)^2$ with $j\geq 0$. Modulo $J$ we have $t=0$ and $y=x$, so the generators \(yF_n\) and \(g_n\) of \(I_n\) become
\[
h_n(x)=x(x-1)\cdots(x-n)
\]
and
\[
q_n(x)=n-(x+1)^2=-(x-j)(x+j+2)
\]
when we work mod $J$.
Write
\[
h_n=(x-j)p_n,\qquad q_n=(x-j)s_n,
\]
where \(s_n=-(x+j+2)\). Since the roots of \(p_n\) are contained in
\(\{0,\ldots,n\}\setminus\{j\}\), the polynomials \(p_n\) and \(s_n\) are coprime.

Set
\[
u=s_nyF_n-p_ng_n\in I_n.
\]
Modulo $J$ this becomes
\[
s_nh_n-p_nq_n=0,
\]
so $u\in I_n\cap J$. We claim that $u\notin I_nJ$. To see this, note that if this were not the case, we could find $\alpha,\beta\in J$ such that
\[
u=\alpha yF_n+\beta g_n,
\]
and hence
\[
(s_n-\alpha)yF_n=(p_n+\beta)g_n.
\]
Since $yF_n$ and $g_n$ are coprime in $R$, it follows that
$g_n$ divides $s_n-\alpha$, say
\[
s_n-\alpha=cg_n.
\]
Reducing modulo $J$ gives $q_n=(x-j)s_n$ divides $s_n$ in $k[x]$.
Since $s_n\ne0$, this is impossible. Thus $u\notin I_nJ$, and consequently
\[
\Tor_1^R(R/I_n,R/J)\ne0.
\]

We have now proved that
\[
\{n\in\N:
\Tor_1^{\cO_{\A^3}}\bigl((f^n)^*\cO_V,\cO_Z\bigr)\ne0\}
=
\{r^2:r\geq1\},
\]
which gives us our counterexample to \cite[Conjecture 1.2]{BSS} for
endomorphisms.  We note that the data are defined over $\mathbb Q$, and both vanishing and nonvanishing of the relevant Tor modules is preserved by extension of scalars to $\overline{\mathbb Q}$, so we have proved Theorem \ref{thm:tor-intro}.
\end{proof}

This example does not address the automorphism case of \cite[Conjecture 1.2]{BSS}, since while the map $f$ above is a birational morphism, its inverse is only rational.  The
positive results of \cite[Theorem 1.3]{BSS}, for smooth surfaces and for
automorphisms lying in an algebraic group action, are therefore unaffected.
It would be interesting to know whether the automorphism version of our conjecture holds.

%\begin{remark}
%The example above uses the same mechanism as the Lee--Nam example
%in \cite[Example 1.12]{YZ}. Their construction realizes the composite
%numbers as a return set in $\A^5$. Here we obtain the positive squares
%using two smooth curves in $\A^3$, and the calculation above shows
%that the same return times are detected by the first Tor group.
%\end{remark}
\section*{AI declaration}

What a bizarre world we live in! The main idea behind this paper comes from a beautiful example of two human mathematicians, Jungin Lee and GyeongHyeon Nam. At some point, I realized that their example could be modified to give an undecidable problem in dynamics involving a single map and, in addition, a counterexample to a conjecture in a paper I had written years ago with Matt Satriano and Sue Sierra.

ChatGPT offered me one free month of what they call a ``Plus'' subscription. I took them up on it and, as an experiment, gave it some of my rough mathematical thoughts on this problem and asked it to turn them into a paper. Aside from the fact that I could not initially understand much of what it had written, it was largely a success. I then rewrote the paper, while continuing to use ChatGPT to discuss exposition and read successive versions of the draft.

Interacting with the machine was genuinely fascinating, but it also left me with the eerie sense that, while we as a species have willingly chosen each individual step that has brought us here, we may not like where all of this ultimately leads. But anyway, I had the subscription, and I kept using it. We eventually converged on a version I liked, although it could not resist suggesting that I remove all of my footnotes.\footnote{It was probably right.}

For reasons I do not fully understand, while all this was going on I also asked it to explain some of the more subtle plot points of the movie \emph{Big Momma's House},\footnote{I saw this movie on an airplane either shortly before or shortly after 9/11. In the movie, there is a woman everyone refers to as ``Big Momma''. I was curious how such a situation could arise unless the woman insisted that she be called ``Big Momma'' during all introductions, but ChatGPT explained how such a phenomenon can arise organically.} a comedy from the year 2000 in which Martin Lawrence plays an FBI agent who disguises himself as a heavyset grandmother as part of an undercover operation to recover stolen bank funds. Strangely, the universe of \emph{Big Momma's House} now makes more sense to me than our own.

\end{document}